\newtheorem{theorem}{Theorem}[section]
\newtheorem{lemma}[theorem]{Lemma}
\newtheorem{definition}[theorem]{Definition}
\newtheorem{proposition}[theorem]{Proposition}
\newtheorem{corol}[theorem]{Corollary}
\newtheorem{remark}[theorem]{Remark}
\newcommand{\C}{{\mathcal C}}
\newcommand{\decode}{\rhd}
\newcommand{\code}{\lhd}
\begin{document}

\title{Girard's $!(\ )$ as a reversible fixed-point operator }
\author{PETER HINES}

\maketitle

\begin{abstract}
We give a categorical description of the treatment of the $!(\ )$ exponential in the Geometry of Interaction system, with particular emphasis on the fact that the GoI interpretation `forgets types'. We demonstrate that it may be thought of as a fixed-point operation for reversible logic \& computation.
\end{abstract}


%
\section{Introduction}
\subsection{Logical background}
The Curry-Howard isomorphism \cite{CH} (also known as the `proofs as programs' correspondence) and later categorical extensions \cite{LS} provide a natural way of looking at {\em logical systems} as {\em computing systems} and vice versa. For applications to reversible and quantum computing, it is common to consider computational interpretation of reversible logics -- in particular, 
J.-Y. Girard's Linear Logic \cite{LL}. This provided a finer-grained decomposition of natural deduction in which the structural operations of {\em copying} and {\em contraction} (i.e. deletion against a copy) were either forbidden, or severely restricted by a strong typing system. As a natural consequence of this, Linear Logic is an essentially reversible logical system, and the computational interpretation provided by extensions of the Curry-Howard isomorphism is in terms of reversible computation.

The computational core of (a significant fragment of) Linear Logic was isolated in the Geometry of Interaction series of papers \cite{GOI1,GOI2,GOI3}. Although the system described was logically degenerate (i.e. conjunction and disjunction were identified, as were universal and existential quantification, and propositions and their negations), the computational core remained --- indeed, as demonstrated in \cite{HA,AHS}, the system described had a natural interpretation as untyped combinatory logic and hence (via the standard embedding of untyped lambda calculus into combinatory logic) was computationally universal, despite being entirely based around reversible primitives.

\subsection{Typing systems in Linear Logic, and untyped computation}
The demonstration of the computational universality of the Geometry of Interaction system given in \cite{AHS} was via an encoding of (untyped) combinatory logic. Indeed, in \cite{GOI1}, Girard states that the system presented `forgets types'. A very natural question is then, {\em `where does this leave the strong typing system used to restrict the structural rules of logic?'} 

Linear Logic itself uses two unary operations, frequently described as modalities,
denoted $!(\ )$ and $?(\ )$, commonly called `of course' and `why not' respectively, or more whimsically `bang' and `whimper'. These are used to provide a typing system that restricts the applicability of potentially irreversible structural rules.

Both these operations survive the passage from Linear Logic to the Geometry of Interaction system (by contrast, it must be noted, with the distinction between conjunction and disjunction, which is lost). This raises the natural question of how an operator, which is supposed to provide a typing system that accommodates irreversible structural operations, can possibly have an interpretation within an entirely reversible, untyped system.

\subsection{The objectives of this paper}
The substantial claims of this paper are the following:
\begin{enumerate}
\item The interpretation of the $!(\ )$ operation (and by duality, the $?(\ )$ operation) within the Geometry of Interaction system is as a fixed-point operation.
\item This fixed-point operation, in stark contrast to fixed-point constructions generally, lives within an entirely reversible setting. 
\item This is possible precisely because the logical/computational interpretation is as an untyped system. 
\end{enumerate}
We take a categorical approach, and -- as is standard -- work within the category of partial injections on sets. We first give explicit description of a monoidal tensor $(\ \star \ )$ that models the (identified) multiplicative conjunction / disjunction of Linear Logic. This is a single-object (i.e. untyped) analogue of the disjoint union within the category of partial injections. 

A `splitting' of this, derived from the quasi-projections and quasi-injections of the category of partial injections, gives rise to an embedding of an algebraic structure known as Girard's dynamical algebra within the same category, and it is shown how iterating this splitting gives rise to a single-object analogue of the Cartesian product at the same object.  

We then describe how the $!(\ )$ operation is defined in terms of this single-object analogue of the Cartesian product, and demonstrate that it is both functorial and satisfies the `fixed-point' condition $f\star !(f)= !(f)$ --- this condition, of course, can only be satisfied within an untyped setting. Finally, the $?(\ )$ operation is shown to be defined in terms of $!(\ )$ and a categorical symmetry isomorphism; and an explicit description of this is also given.

As the above constructions and results rely on a category with two monoidal tensors satisfying a distributivity law, we discuss the similarities and differences between this approach, and an alternative application \cite{PH13a} of the $!(\ )$ operation of linear logic found in quantum computation (and heavily used in Shor's quantum factorisation algorithm) that also relies on categorical distributivity. 

\section{Categorical preliminaries}
\subsection{The category of partial bijections}
The interpretation of the Geometry of Interaction system takes place within the category $\bf pInj$ of partial bijections on sets\footnote{In fact, J.-Y. Girard works within a category of separable Hilbert spaces. However, as observed by many authors \cite{SA96,PH97,HA,AHS}, all the action takes place within the image of $\bf pInj$ under Barr's $l_2:{\bf pInj}\rightarrow {\bf Hilb}$ functor; thus working with $\bf pInj$ itself is sufficient, and the additional linear structure of $\bf Hilb$ plays no essential role.}, defined as follows: 
\begin{definition}{\bf The category of partial bijections}\\
The category $\bf pInj$ of {\bf partial injections} has as objects the proper class of all sets. For all $X,Y\in Ob({\bf pInj})$, the homset ${\bf pInj}(X,Y)$ is the subset of $Y\times X$ satisfying 
\[ y=y' \ \ \Leftrightarrow  \ \ x=x' \ \ \ \ \forall \ (y,x)\ ,\ (y',x')\ \in \ f\subseteq Y\times X \]
Composition is inherited from the category of relations, and it is straightforward that $\bf pInj$ is closed under this composition. It is common to use functional notation, and write $f(x)=y$ for $(y,x)\in f$.
\end{definition}

The category $\bf Pinj$ has a very strong notion of duality:
\begin{definition}{\bf Daggers and generalised inverses}\\
A {\bf dagger} on a category is simply a duality that is the identity on objects -- i.e.  a contravariant endofunctor $( \ )^\dagger:\C\rightarrow \C$ satisfying $ (1_A)^\dagger = 1_A$ and $\left( (f)^\dagger\right)^\dagger =f$, for all $A\in Ob(\C)$ and $f\in \C(A,B)$. 

An {\bf inverse category} is a category $\C$ where every arrow $f\in \C(X,Y)$ has a unique {\bf generalised inverse} $f^\ddag\in \C(Y,X)$ satisfying $ff^\ddag f=f$ and $f^\ddag ff^\ddag =f^\ddag$.
\end{definition}

The generalised inverse of an inverse category is (unsurprisingly) an example of a dagger operation. It is folklore that $\bf pInj$ is an inverse category, with generalised inverse defined by 
\[ f^\ddag=\{ (x,y):(y,x)\in f\}\in {\bf pInj}(Y,X) \ \  \ \ \forall \ f\in {\bf pInj}(X,Y) \]
This operation exhibits the self-duality ${\bf pInj}\cong {\bf pInj^{op}}$.

In contrast to dagger-equipped categories generally, inverse categories have a naturally defined partial order on their hom-sets.
\begin{definition}\label{nat_po}{\bf The natural partial order of an inverse category}\\
Let $\left(\mathcal C,(\ )^\ddag \right)$ be an inverse category. 
 For all $A,B\in Ob({\mathcal C})$, the relation $\unlhd_{A,B}$ is defined on ${\mathcal C}(A,B)$, as follows:
\[ f\unlhd_{A,B} g \ \mbox{ iff } \ \exists \ e^2=e\in {\mathcal C}(A,A) \ s.t. \ f=ge \]
For all $A,B\in Ob({\mathcal C})$, the relation $\unlhd_{A,B}$ is a partial order on ${\mathcal C}(A,B)$, called the {\bf natural partial order}.
When it is clear from the context, it is standard to omit the subscript on $\unlhd$.
\end{definition}
The category ${\bf pInj}$ is the `prototypical' inverse category, in that all inverse categories arise as subcategories of $\bf pInj$ (See \cite{CH} for this result, and \cite{RC} for similar in small categories. These results are based on the classic Wagner-Preston representation theorem for inverse semigroups \cite{GP,VW}, which itself generalises Cayley's result for groups).

\subsection{Monoidal tensors on ${\bf pInj}$}
The category $\bf pInj$ has two monoidal tensors, the Cartesian Product $\_ \times \_$, and the Disjoint Union $\_ \uplus \_$. Neither of these are products (\& hence, by duality, nor coproducts). Further, $\bf pInj$ is not closed; however, it does have a categorical trace \cite{JSV} with respect to disjoint union, and this trace provides the dynamics of the cut-elimination process \cite{SA96,AHS,PH97}. Thus, we may consider the dynamics of the Geometry of Interaction system to be give by composition within the compact closed category $\bf Int(pInj)$. This is equivalent to the  $\bf GoI(pInj)$ of \cite{SA96}; however, we wish to distinguish between Girard's GoI {\em system}, and Abramsky's GoI {\em categorical construction}, so use Joyal, Street, \& Verity's terminology throughout.

\section{GoI connectives as untyped monoidal tensors}
The Geometry of Interaction system (at least, the first two parts \cite{GOI1,GOI2}) is based on the multiplicative fragment of Linear Logic \cite{LL} only\footnote{The extension to the additives given in \cite{GOI3} is of very different character, requiring significantly different categorical models, and beyond the scope of this paper.}, essentially without quantifiers and units. Thus, in a categorical setting, we may work with monoidal categories that are not assumed to have a unit object (see definition \ref{semicat} below).Two further aspects of the GoI system are particularly relevant:
\begin{enumerate}
\item As stated in \cite{GOI1}, the system {\em ``forgets types''}.  Categorically, as with the untyped lambda calculus, the interpretation takes place within a single-object category, or monoid.
\item The connectives {\em tensor} and {\em par} are identified. Thus the categorical interpretation requires a single connective (i.e. semi-monoidal tensor) and corresponding adjoint notion of closure (internal hom, or trace). 
\end{enumerate}
The requirement that the GoI system `forgets types' means that -- taking the standard {\em `types as objects within a category'} interpretation -- the action takes place within a single-object category. For a single-object {\em monoidal} category to be non-trivial, the unique object cannot be a unit object for the tensor; this is axiomatised within the theory of {\em semi-monoidal categories} below.

\subsection{Single-object and unitless monoidal categories}
The following straightforward definition, taken from \cite{PH13d}, axiomatises categories that have all the structure of a monoidal category except perhaps for a unit object: 
\begin{definition}\label{semicat}
Let $\C$ be a category. We say that $\C$ is {\bf semi-monoidal} when there exists a functor 
$( \_\otimes \_ ) : \C\times \C \rightarrow \C$ that we call the {\bf tensor},
together with a natural object-indexed family of {\bf associativity isomorphisms}
\[ \{  \  \tau_{A,B,C} : A\otimes (B\otimes C)\rightarrow (A\otimes B) \otimes C \} _{A,B,C\in Ob(\C)} \]
satisfying MacLane's {\bf pentagon condition} 
\[ ( \tau_{A,B,C}\otimes 1_D)   \tau_{A,B\otimes C,D}  (1_A \otimes  \  \tau_{B,C,D}) =    \tau_{A\otimes B,C,D}  \tau_{A,B,C\otimes D} \]

When there also exists a natural object-indexed natural family of {\bf symmetry isomorphisms} $\{ \sigma_{X,Y}:X\otimes Y\rightarrow Y\otimes X \}_{X,Y\in Ob(\C) }$
satisfying MacLane's {\bf hexagon condition} $\tau_{A,B,C}\sigma_{A\otimes B,C}  \tau_{A,B,C} =(\sigma_{A,C}\otimes1_B)  \tau_{A,C,B} (1_A\otimes \sigma_{B,C})$ 
we say that $(\C,\otimes ,  \tau,\sigma)$ is a {\bf symmetric semi-monoidal category}. 
A semi-monoidal category $(\C,\otimes, \  \tau_{\_,\_,\_})$ is called {\bf strictly associative} when $  \  \tau_{A,B,C}$ is an identity arrow\footnote{This is not implied by equality of objects $A\otimes (B\otimes C) = (A\otimes B)\otimes C$, for all $A,B,C\in Ob(\C)$, even when this equality is well-defined (e.g. within small categories). Although MacLane's pentagon condition is trivially satisfied by the appropriate identity arrows, naturality with respect to the tensor may nevertheless fail. The examples we present in Section \ref{untyped_connective} onwards  illustrate this phenomenon.}, for all $A,B,C\in Ob(\C)$. 
A functor $\Gamma : \C \rightarrow \mathcal D$ between two semi-monoidal categories $(\C ,\otimes_\C )$ and $(\mathcal D,\otimes_\mathcal D)$ is called (strictly) {\bf semi-monoidal} when $\Gamma (f\otimes_\mathcal C g) = \Gamma(f) \otimes_\mathcal D \Gamma(g)$.
A semi-monoidal category $(\C,\otimes)$ is called {\bf monoidal} when there exists a {\bf unit object} $I\in Ob(\C)$, together with, for all objects $A\in Ob(\C)$, distinguished isomorphisms 
$\lambda_A:I \otimes A \rightarrow A$ and $\rho_A : A\otimes I \rightarrow A$
satisfying MacLane's {\bf triangle condition} $1_U\otimes \lambda_V = (\rho_U\otimes 1_V) \tau_{U,I,V}$ for all $U,V\in Ob(\C)$.
\end{definition}

Clearly, any single-object monoidal category is trivial; the unique object must be the unit object, with all that entails. However, single-object semi-monoidal categories may have a much richer theory.

\begin{definition} we define an {\bf untyped monoidal category} to be a single-object semi-monoidal category.
\end{definition}

\begin{remark}{\em Unitless monoidal categories, and categorical coherence} A natural question is whether MacLane's coherence theorems are still applicable in the unitless or semi-monoidal setting? Based on the theory of Saavedra units, an appendix to \cite{PH13d} gives a method of adjoining a (strict) unit object to a semi-monoidal category that is right-inverse to the obvious forgetful functor. Thus, all the standard theories of coherence for associativity, symmetry, distributivity, \&c. remain applicable.
\end{remark}

\subsection{The single untyped connective of GoI}\label{untyped_connective}
Up to the embedding $l_2:({\bf pInj},\uplus)\rightarrow ({\bf Hilb},\oplus)$, the constructions of the Geometry of Interaction system (parts I, II) take place within the endomorphism monoid of a single countably infinite set --- for simplicity we will take this to be the natural numbers $\mathbb N$. The significant feature of $\mathbb N$ that allows its use in this setting is the fact that it is {\em self-similar}.
\begin{definition}\label{basicselfsim}
Let $(\C,\otimes)$ be a semi-monoidal category. An object  $S\in Ob (\mathcal C)$ is called {\bf self-similar} when it satisfies $S\cong S\otimes S$. 
Making the arrows exhibiting this self-similarity explicit, we define a 
{\bf self-similar structure} $(S,\code,\decode )$ to be an object 
$S\in Ob(\C)$, together with two mutually inverse arrows
\begin{itemize}
\item {\bf (code)} $\code\in \C(S\otimes S, S)$.
\item {\bf (decode)}  $\decode \in \C(S,S\otimes S)$.
\end{itemize}
satisfying $\decode\code = 1_{S\otimes S}$ and $\code \decode= 1_S$.

It is proved in \cite{PH13d} that self-similar structures are unique up to unique isomorphism; however, actual uniqueness forces a collapse to the unit object.
\end{definition}
It is straightforward from simple `Hilbert hotel' style reasoning that $\mathbb N$ is self-similar with respect to both disjoint union and Cartesian product, so 
\[ \mathbb N \times \mathbb N \ \cong \ \mathbb N \ \cong \ \mathbb N \uplus \ \mathbb N \]

The particular self-similar structure that Girard uses to exhibit the self-similarity $\mathbb N \cong \mathbb N \uplus \mathbb N$ is given by the {\em Cantor pairing}:
\begin{definition}\label{cantor_pairing}
Using the explicit description of the disjoint union as  $\mathbb N \uplus \mathbb N = \mathbb N \times \{ 0,1\}$, the {\bf Cantor pairing} $\code: \mathbb N \uplus \mathbb N \rightarrow \mathbb N $ is the bijection
$\code(n,i) = 2n+i$. 

Its (global) inverse $\decode:\mathbb N \rightarrow \mathbb N \uplus \mathbb N$ is given by 
\[ \decode(n) = \left\{ \begin{array}{lr}  \left(\frac{n}{2},0\right) & \mbox{$n$ even,} \\
			&			\\
							\left(\frac{n-1}{2},1\right) & \mbox{$n$ odd.} \\
							\end{array}\right.
							\]
\end{definition}
(We will demonstrate in Section \ref{internal_times} how a self-similar structure exhibiting the self-similarity $\mathbb N \cong \mathbb N \times \mathbb N$ may be derived from the above Cantor pairing).

The single connective of \cite{GOI1,GOI2} is then modelled by the following operation:
\begin{definition}\label{star_def}
We define
\[ \_ \star\_ : {\bf pInj}(N,N)\times {\bf pInj}(N,N) \rightarrow {\bf pInj}(N,N) \]
by, for all $f,g: \mathbb N \rightarrow \mathbb N$, 
\[ f\star g \ = \ \code (f\uplus g ) \decode \]
Diagramatically, 
\[ 
\xymatrix{ 
\mathbb N \uplus \mathbb N \ar[rr]^{f\uplus g} &		&	\mathbb N \uplus \mathbb N \ar[d]^{\code}  \\
\mathbb N \ar[u]^\decode \ar[rr]_{f\star g} 		&		& \mathbb N 
}
\]
\end{definition}
As shown in \cite{PH97,PH99}, this gives $({\bf pInj} (\mathbb N ,\mathbb N),\_ \star \_)$ all the structure of a symmetric monoidal category apart from the unit object; giving what \cite{PH13d} refers to as a {\em unitless monoidal category}. We refer to \cite{PH13b} for coherence results relating associativity, self-similarity, and untypedness,  and \cite{PH13c} for elementary arithmetic proofs that the associativity and symmetry isomorphisms 
\[  \tau (n) =\left\{ \begin{array}{lr}
2n &  \ \ \ n\ (mod \ 2)=0, \\
		& 	\\
n+1 &  \ \ \ n\ (mod \ 4)=1,  \\	
		& 	\\
\frac{n-1}{2} &  \ \ \  n\ (mod \ 4)=3.  \\	
\end{array}\right. \ \ , \ \ \sigma (n) =\left\{ \begin{array}{lr} n+1 & \ \ \ n \mbox{ even,} \\ 	&	\\ n-1 & \ \ \ n \mbox{ odd.} 
\end{array}\right.
\]
satisfy MacLane's Pentagon and Hexagon conditions \cite{MCL}.

\subsection{The Geometry of Interaction and models of $\lambda$-calculus}
So far, the categorical interpretation of Girard's GoI system has lead to a single-object semi-monoidal closed category; there is an obvious comparison to be made with the $C$-monoids (single-object Cartesian closed categories without unit objects) modelling the pure untyped $\lambda$- calculus. Significant differences are that the single-object tensor in Girard's system is neither a product nor a coproduct, and the form of categorical closure is {\em compact closure} rather than {\em Cartesian closure}. The absence of the universal property associated with categorical products translates into the failure of copying (and, by the dualities of $\bf pInj$, the failure of contraction). 

\section{The untyped $!(\ )$}
The computational power of GoI is recovered by the use of the $!(\ )$ operation, giving (as shown in \cite{AHS}) the computationally universal linear combinatorial algebra.
We now give a categorical description of how Girard modelled the $!(\ )$ operation {\em in this untyped setting}, together with the interaction between the $!(\ )$ and the single untyped tensor used to model the connectives. 

Let us denote the endomorphism monoid ${\bf pInj}(\mathbb N ,\mathbb N)$ by $End(\mathbb N)$. Recall the untyped monoidal tensor 
\[ \_\star\_  : End(\mathbb N)\times End(\mathbb N) \rightarrow End(\mathbb N) \]
defined in terms of the Cantor pairing (Definition \ref{cantor_pairing}) by 
\[ f\star g\ = \ \code (f\uplus g)\decode \ \ \ \forall \ f,g\in End(\mathbb N) \]
The key to the untyped version of the $!(\ )$ is `splitting' the Cantor pairing (and hence the untyped tensor) into a construction based on two partial injections.

\subsection{Splitting the untyped tensor}\label{split}
Although disjoint union $\_ \uplus \_ : {\bf pInj}\times {\bf pInj}\rightarrow {\bf pInj}$ is neither a product nor a coproduct is nevertheless has `quasi-projections' 
\[ \xymatrix{ 
			&	A \uplus B \ar[dl]_{\pi_0} \ar[dr]^{\pi_1}	& 		\\
A			&									&	B 
}
\]
defined in the obvious way by 
\[ \pi_0=\{ ((a,0),a) : a\in A\} \ \mbox{ and } \ \pi_1=\{ ((b,1),b) : b\in B\} .\]
Taking generalised inverses gives the `quasi-injections'
\[ 
\xymatrix{
				& A \uplus B	&			\\
A \ar[ur]^{\iota_0}	&			& B \ar[ul]_{\iota_1} 
}
\]
Together, these satisfy the relations
\[ \begin{array}{ccc}
\pi_0\iota_0=1_A	&	\ \ \ \ 	&	\pi_1\iota_1=1_B \\
				&		&		\\
\pi_1\iota_0=0_{AB}	&	&	\pi_1\iota_2=0_{BA}
\end{array}
\]
We may again use the Cantor pairing to construct untyped analogues of these quasi- projections/injections at the endomorphism monoid of $\mathbb N$.\footnote{This process of constructing `untyped analogues' of categorical structures is of course functorial \cite{PH97,PH99,PH13d}, and may be used to construct untyped versions of many categorical properties, including the trace and compact closure used to model the dynamics of cut-elimination \cite{PH97,PH99}. It is properly thought of as a `strictification' procedure applied to the self-similarity exhibited by the Cantor pairing, in an analogous manner to MacLane's strictification procedure for associativity. See  \cite{PH13d} for details.}
\begin{definition}\label{dynamical}
We define the {\bf untyped quasi-projections} $p,q\in End(\mathbb N)$ as follows:
\[ \xymatrix{
\mathbb N \uplus \mathbb N \ar[rr]^{\pi_0} &	& \mathbb N 	& 	&	\mathbb N \uplus \mathbb N \ar[ll]_{\pi_1}	\\
\mathbb N \ar[u]^\decode \ar[urr]_p		&	&			&	&  \mathbb N \ar[ull]^q \ar[u]_\decode 
}
\]	
Taking generalised inverses of the above diagram gives the {\bf untyped quasi-injections} $p^\ddag,q^\ddag\in End(\mathbb N)$, as follows:
\[ \xymatrix{
\mathbb N \uplus \mathbb N \ar@{<-}[rr]^{\iota_0} &	& \mathbb N 	& 	&	\mathbb N \uplus \mathbb N \ar@{<-}[ll]_{\iota_1}	\\
\mathbb N \ar@{<-}[u]^\code \ar@{<-}[urr]_{p^\ddag}		&	&			&	&  \mathbb N \ar@{<-}[ull]^{q^\ddag} \ar@{<-}[u]_\code 
}
\]	
 \end{definition}
 We may give explicit descriptions of the above partial injections, as follows:
 \begin{lemma}\label{simplesums}
 Let $p,q,p^\ddag,q^\ddag\in End(\mathbb N)$ be the untyped quasi- projections/injections defined in terms of the Cantor pairing, as above. Then $p,q\in End(\mathbb N)$ are the following bijections:
 \[ p(n) = 2n \ \mbox{ and } q(n)=2n+1 \]
 and similarly $p^\ddag,q^\ddag\in End(\mathbb N)$ are the following partial bijections:
 \[ p^\ddag(n) = \left\{ \begin{array}{lr} \frac{n}{2} & \ \ \ \ n \mbox{ even,} \\ & \\ \mbox{undefined} & \ \ \ \ \mbox{otherwise.} \end{array} \right.
 \ \ \mbox{ and } 
 q^\ddag(n) = \left\{ \begin{array}{lr} \frac{n-1}{2} & \ \ \ \ n \mbox{ odd,} \\ & \\ \mbox{undefined} & \ \ \ \ \mbox{otherwise.} \end{array} \right.
 \]
 \end{lemma}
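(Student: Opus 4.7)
The proof is by direct computation. My plan is to unwind each of the four composites defining $p, q, p^\ddag, q^\ddag$ in Definition \ref{dynamical} using the explicit formula for the Cantor pairing $\code(m, i) = 2m + i$ (Definition \ref{cantor_pairing}), the case-defined inverse $\decode$, and the set-theoretic descriptions of $\pi_i, \iota_i$ from Section \ref{split}, then evaluate on an arbitrary $n \in \mathbb{N}$.

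For $p$ and $q$, I would case-split on the parity of $n$: $\decode$ lands in the first summand of $\mathbb{N} \uplus \mathbb{N}$ on even inputs and in the second summand on odd inputs, and each quasi-projection is defined only on its own summand, so each composite survives on exactly one parity class, yielding a partial bijection whose action is read off from the formula for $\decode$. For $p^\ddag$ and $q^\ddag$ there are two routes to the same answer: direct computation from the dual diagram, evaluating $\code\iota_0$ and $\code\iota_1$ on an arbitrary $n$; or application of the dagger identity $(fg)^\ddag = g^\ddag f^\ddag$ to the formulas just obtained for $p, q$, using $\decode^\ddag = \code$ and $\pi_i^\ddag = \iota_i$. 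The two routes yield the same partial injection, with mutual agreement automatic from uniqueness of generalized inverses in $\mathbf{pInj}$; the direct route serves as a useful sanity check against the convention.

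No step of this argument is mathematically nontrivial; the only real care needed is in the book-keeping of partial domains through each composition, and in respecting the orientation convention $\mathbf{pInj}(X, Y) \subseteq Y \times X$ when identifying which coordinate of each pair is the input --- without which one risks conflating $p$ with its generalized inverse $p^\ddag$, since the two are exactly the reverse relations of one another as partial injections on $\mathbb{N}$.
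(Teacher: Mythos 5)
Your overall approach --- unwinding the four composites of Definition \ref{dynamical} against the explicit formulae for $\code$, $\decode$, $\pi_i$ and $\iota_i$, with a parity case-split --- is exactly what the paper intends; its entire proof is the single line ``this follows immediately by expanding out the above definitions,'' so in spirit the two arguments coincide. However, the concrete expansion you describe for $p$ and $q$ lands on the wrong side of precisely the $p\leftrightarrow p^\ddag$ conflation that you warn against in your final paragraph. You say that for $p$ and $q$ ``each composite survives on exactly one parity class, yielding a partial bijection whose action is read off from the formula for $\decode$'': that computation (apply $\decode$, then kill one summand) produces the properly partial maps $n\mapsto n/2$ on evens and $n\mapsto (n-1)/2$ on odds --- which are the formulae the Lemma assigns to $p^\ddag$ and $q^\ddag$, not to $p$ and $q$. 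The Lemma asserts that $p$ and $q$ are the \emph{total} maps $n\mapsto 2n$ and $n\mapsto 2n+1$, i.e.\ the composites $n\mapsto (n,0)\mapsto \code(n,0)=2n$ and $n\mapsto (n,1)\mapsto \code(n,1)=2n+1$, which factor through $\code$ rather than $\decode$. As written, your route for $p,q$ therefore derives formulae that contradict the statement being proved.

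To be fair, the paper's notation invites this slip: the first diagram in Definition \ref{dynamical} draws $\pi_0$ as an arrow $\mathbb N\uplus\mathbb N\to\mathbb N$, yet under the stated homset convention ${\bf pInj}(X,Y)\subseteq Y\times X$ the set $\pi_0=\{((a,0),a):a\in A\}$ is the arrow $a\mapsto (a,0)$ pointing the other way. The assignment of names is pinned down by the downstream material: the relations $pp^\ddag=1_{\mathbb N}=qq^\ddag$ and $pq^\ddag=0_{\mathbb N}=qp^\ddag$, and the formula $\psi(a,b)=q^bp(a)=2^{b+1}a+2^b-1$, hold only when $p$ and $q$ are the total doubling maps and $p^\ddag,q^\ddag$ the partial halving maps. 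So the four partial injections your two routes produce are all correct as functions; what is missing is the correct attachment of names to formulae, and the sanity checks you already propose (agreement of the direct computation with the generalised-inverse computation, plus consistency with the dynamical-algebra relations) are exactly what resolves it. With that one correction the proof is complete and is the paper's own.
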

 \begin{proof}
 This follows immediately by expanding out the above definitions.
  $\Box$ \end{proof}
 The above explicit form makes it apparent how $p,q$ (resp. $p^\ddag,q^\ddag$) may be thought of as a `splitting' of the Cantor pairing (resp. its inverse). 
 The untyped quasi- projections/injections give rise to Girard's {\em dynamical algebra}, as we now demonstrate:
 
 \begin{proposition}
 Let $p,q,p^\ddag,q^\ddag\in End(\mathbb N)$ be as defined above. Then 
 \[ pp^\ddag=1_\mathbb N =qq^\ddag \ \ \mbox{ and } \ \ pq^\ddag = 0_\mathbb N = qp^\ddag \]
 Further, when we consider the partial order on $End(\mathbb N)$ provided by the inverse category structure (Definition \ref{nat_po}),
 \[ p^\ddag p\vee q^\ddag q = 1_\mathbb N \] 
 \end{proposition}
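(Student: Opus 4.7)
Plan: Each of the four pointwise identities $pp^\ddag = qq^\ddag = 1_\mathbb N$ and $pq^\ddag = qp^\ddag = 0_\mathbb N$ reduces to an elementary calculation using the explicit formulas of Lemma \ref{simplesums}. I would unfold each composite by tracking an arbitrary $n \in \mathbb N$ and splitting by parity: in each ``same-letter'' case, one generator is a bijection of $\mathbb N$ onto the evens (respectively odds) while its generalised inverse is the set-theoretic inverse defined on exactly that image, so the composite is total and equals $1_\mathbb N$; in each ``cross'' case, the output of one generator always lies outside the domain of the other's partial inverse at every input, so the composite has empty graph and equals $0_\mathbb N$. A slightly more categorical rendering of the same proof uses the definitions of the generators via $\code, \decode$ and the typed identities $\pi_i \iota_j = 1_\mathbb N$ when $i = j$ and $0$ otherwise; the self-similarity $\decode \code = 1_{\mathbb N \uplus \mathbb N}$ then cancels the central block in each composite and reduces the whole equation to one of these typed identities.

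For the final identity $p^\ddag p \vee q^\ddag q = 1_\mathbb N$ in the natural partial order $\unlhd$ of Definition \ref{nat_po}, I would first identify $p^\ddag p$ as the idempotent partial identity on the even integers and $q^\ddag q$ as the idempotent partial identity on the odd integers. Each is a restriction of $1_\mathbb N$ by an idempotent (the partial identity on the complementary parity), so each already lies below $1_\mathbb N$ in $\unlhd$. To upgrade this to an honest join, I would observe that the two graphs are disjoint portions of the diagonal of $\mathbb N \times \mathbb N$, so their set-theoretic union is again a partial injection and is evidently the least upper bound of the two in $\unlhd$. Since the evens and odds partition $\mathbb N$, this union is the full diagonal, i.e.\ $1_\mathbb N$.

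The only non-mechanical point is the existence of the join: an inverse category need not enjoy binary joins in its natural partial order, but compatible pairs (those whose graphs together form a partial injection) always do, which is immediate from the characterization of $\unlhd$ as restriction by idempotents given in Definition \ref{nat_po}. The parity partition of $\mathbb N$ supplies the required compatibility here, so the join exists and is $1_\mathbb N$ as claimed.
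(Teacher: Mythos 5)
Your proof is correct and follows essentially the same route as the paper, which likewise disposes of all five identities by direct computation from the explicit formulas $p(n)=2n$, $q(n)=2n+1$ of Lemma \ref{simplesums} (the paper compresses this to a single sentence). The only substantive addition is your justification that the join $p^\ddag p \vee q^\ddag q$ actually exists --- via compatibility of the two partial identities on the evens and odds, whose graphs are disjoint pieces of the diagonal --- a point the paper leaves implicit but which is handled exactly as you describe.
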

 \begin{proof}
 These are all immediate from the explicit description of the untyped quasi- projections/injections given in Lemma \ref{simplesums} above.
  $\Box$ \end{proof}
 
 The untyped tensor, modelling the sole connective of the GoI system, may also be given in these terms:
 \begin{corol}
 Let $\_ \star \_ : End(\mathbb N ) \times End(\mathbb N)\rightarrow End(\mathbb N)$ be as in Definition \ref{star_def}. Then for all $f,g\in End(\mathbb N)$, the join (with respect to the natural partial order $\unlhd$ induced by the generalised inverse) $p^\ddag fp \vee q^\ddag f q$ exists, and satisfies
 \[ f\star g \ = \ p^\ddag fp \vee q^\ddag gq \]
 \end{corol}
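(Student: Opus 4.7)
The plan is to argue structurally: rewrite $f\star g$ using the decomposition of the identity on $\mathbb N\uplus\mathbb N$ in terms of quasi-projections and quasi-injections, distribute composition over the join, and identify the resulting composites with the partial injections of Definition~\ref{dynamical}. The explicit arithmetic of Lemma~\ref{simplesums} is then used only at the very end, to name certain composites built from the Cantor pairing.

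The first step inserts the identity $1_{\mathbb N\uplus\mathbb N}=\iota_0\pi_0\vee\iota_1\pi_1$ (a routine consequence of the quasi-projection/quasi-injection relations of Section~\ref{split}) into the defining formula $f\star g=\code(f\uplus g)\decode$. Using the distributivity of composition over $\vee$ --- valid whenever the join in question exists --- together with the identities $\pi_0(f\uplus g)=f\pi_0$ and $\pi_1(f\uplus g)=g\pi_1$ that follow from the componentwise action of $f\uplus g$, this rewrites $f\star g$ as the join of $(\code\iota_0)\,f\,(\pi_0\decode)$ and $(\code\iota_1)\,g\,(\pi_1\decode)$. A short calculation using Definition~\ref{cantor_pairing} then identifies $\code\iota_0=p$, $\code\iota_1=q$, $\pi_0\decode=p^\ddag$, and $\pi_1\decode=q^\ddag$, yielding the claimed formula.

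It remains to confirm that the join $p^\ddag fp\vee q^\ddag gq$ genuinely exists in the natural partial order of Definition~\ref{nat_po}. Using the explicit formulae in Lemma~\ref{simplesums}, both the domain and the image of $p^\ddag fp$ are subsets of the even integers, while those of $q^\ddag gq$ are subsets of the odd integers; the two partial injections are therefore trivially compatible as subsets of $\mathbb N\times\mathbb N$, and their set-theoretic union is again a partial injection. That this union really is the join in $\unlhd$ follows because each summand arises from it by restriction along the idempotent partial identity on the even, respectively odd, integers. This existence question is the only non-mechanical step in the argument; once it is settled, the identity is a direct consequence of the biproduct-like decomposition of the identity on $\mathbb N\uplus\mathbb N$.
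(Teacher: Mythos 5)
Your structural route---inserting $1_{\mathbb N\uplus\mathbb N}=\iota_0\pi_0\vee\iota_1\pi_1$ into $\code(f\uplus g)\decode$, distributing composition over the join, and using naturality of the quasi-projections---is sound in outline and is genuinely different from the paper's proof, which is just ``expand the definitions''. Your existence argument for the join (the two composites have domain and image separated into evens and odds, so they are compatible and their union is a partial injection, which is then the least upper bound in $\unlhd$) is also the right argument. Two caveats on the set-up: the decomposition $1_{\mathbb N\uplus\mathbb N}=\iota_0\pi_0\vee\iota_1\pi_1$ is \emph{not} a consequence of the four displayed relations $\pi_i\iota_j=\delta_{ij}$ alone (those hold for any pair of disjoint split subobjects, including proper ones); you need the explicit set-theoretic description of $\pi_i,\iota_i$, exactly as the paper needs the extra axiom $p^\ddag p\vee q^\ddag q=1_{\mathbb N}$ in its Proposition on the dynamical algebra.

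The genuine problem is a dagger transposition at the final step. Your distribution yields $f\star g=(\code\iota_0)\,f\,(\pi_0\decode)\ \vee\ (\code\iota_1)\,g\,(\pi_1\decode)$, and you then set $\code\iota_0=p$ and $\pi_0\decode=p^\ddag$; but substituting those identifications gives $p f p^\ddag\vee q g q^\ddag$, which is not the claimed $p^\ddag f p\vee q^\ddag g q$. Definition \ref{dynamical} in fact defines $p:=\pi_0\decode$ and obtains $p^\ddag=\code\iota_0$ by taking generalised inverses, i.e.\ the opposite of your naming; with that convention your derivation lands exactly on the stated formula. You have instead matched against the explicit arithmetic of Lemma \ref{simplesums} ($p(n)=2n$), which is the $\ddag$-transpose of the diagrammatic definition --- the paper is internally inconsistent on this point, and you have inherited the inconsistency rather than resolved it. The same correction is needed in your existence argument: under the formulas of Lemma \ref{simplesums} it is $pfp^\ddag$, not $p^\ddag fp$, whose domain and image lie in the even numbers (the composite $p^\ddag fp$ with $p(n)=2n$ sends $n$ to $f(2n)/2$, whose domain and image are unconstrained). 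So: fix one convention for $p,q$ --- preferably $p=\pi_0\decode$, $q=\pi_1\decode$ as in Definition \ref{dynamical}, since that is what the corollary's statement presupposes --- and carry it through both the identification step and the parity argument; as written, the proof establishes the transposed identity.
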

 \begin{proof}
 This again simply follows by expanding out the basic definitions.
  $\Box$ \end{proof}
 
 \subsection{Untyped analogues of the Cartesian product}\label{internal_times}
 The splitting of the Cantor pairing (and hence the untyped tensor) given in Section \ref{split} above may be used to construct an isomorphism $\mathbb N \times \mathbb N \cong \mathbb N$ in terms of the Cantor pairing exhibiting the isomorphism $\mathbb N \uplus \mathbb N \cong  \mathbb N$. 
 
 \begin{definition}\label{psidef}
 we define the {\bf exponential bijection} to be the isomorphism $\psi:\mathbb N \times \mathbb N \rightarrow \mathbb N$ given by 
 \[ \psi(a,b) \ = \ q^b p(a) \]
 where $p,q\in End(\mathbb N)$ are the untyped quasi-projections given in Definition \ref{dynamical}. Using the explicit description of $p,q$ from Lemma \ref{simplesums} gives the following formula: 
 \[ \psi(x,y) \ = \ 2^{y+1}x+ 2^{y}-1 \]
 \end{definition}
 
 \begin{remark}
 The construction of a self-similar structure $(\mathbb N,\psi, \psi^{-1})$ of $({\bf pInj},\times)$  from a self-similar structure $(\mathbb N ,\code,\decode)$ of $({\bf pInj},\uplus )$ given above 
clearly relies on the fact that $\mathbb N$ is countable.  Although we may perform similar constructions with uncountable self-similar objects of $\bf pInj$ -- such as the Cantor set $\mathfrak C$ -- this does not in general result in isomorphisms exhibiting the self-similarity $\mathfrak C \times \mathfrak C\cong \mathfrak C$; rather, we get a bijection $\mathfrak C \times \mathbb N \ \cong \ \mathfrak C$.

Even when working with countably infinite objects such as  $\mathbb N \in Ob({\bf pInj})$, we need to ensure that any self-similar structure $(\mathbb N , \phi :\mathbb N \uplus \mathbb N \rightarrow \mathbb N ,\phi^{-1}: \mathbb N \rightarrow \mathbb N \uplus \mathbb N)$ used satisfies the `no-residue' condition 
 \[ \bigcap _{j=0}^\infty \phi\left( \mathbb N , 1\right) \ = \ \emptyset \]
 This condition is, of course, satisfied by the Cantor pairing.
 \end{remark}
 
The untyped tensor $\_ \star \_$ of Definition \ref{star_def} may be thought of as a single-object analogue of the disjoint union; in a similar way,  we may use the bijection $\psi:\mathbb N \times \mathbb N \rightarrow \mathbb N$ to form an untyped analogue of the Cartesian product:
 
 \begin{definition}
 We define the {\bf exponential tensor} $\_ \odot\_ : End(\mathbb N )\times End(\mathbb N)\rightarrow End(\mathbb N)$ in terms of the exponential bijection of Definition \ref{psidef} above, as follows:
 \[ f \odot g \ = \ \psi (f\times g) \psi^{-1} \]
 Diagramatically, 
 \[ \xymatrix{ 
 \mathbb N \times \mathbb N \ar[rr]^{f \times g} 	&		&  \mathbb N \times \mathbb N \ar[d]^\psi \\
 \mathbb N \ar[u]^{\psi^{-1}} \ar[rr]_{f\odot g}	&		& \mathbb N 
 }
 \]
 \end{definition}
 This is again a unitless monoidal tensor on $End(\mathbb N)$ satisfying associativity and symmetry up to canonical isomorphisms (see \cite{PH13d} for a general construction, of which this is a special example, and \cite{PH97} for an explicit description of the associativity and symmetry isomorphisms.
 
 \subsection{Constructing the $!(\ )$}
 The operation used in \cite{GOI1,GOI2} to model the $!(\ )$ operation may be defined in terms of the above `exponential tensor' as follows:
 \begin{definition}
 We define the {\bf bang} operation $!(\ ) : End(\mathbb N)\rightarrow End(\mathbb N)$ by 
 \[ !(f) \ = \ (1_\mathbb N \odot f) \]
 \end{definition}
 
 Basic properties of $!(\ ):End(\mathbb N)\rightarrow End( \mathbb N)$ follow by functoriality.
 \begin{lemma}
 The operation $!(\ ):End(\mathbb N ) \rightarrow End(\mathbb N)$ defined above is a monoid homomorphism.
 \end{lemma}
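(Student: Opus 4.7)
The plan is to deduce both conditions for a monoid homomorphism directly from the bifunctoriality of the exponential tensor $\_\odot\_$, noting that $!(\ )$ is simply the operation of tensoring with the identity on the left.

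First I would establish that $\_\odot\_$ is a bifunctor on the one-object category $End(\mathbb N)$. Since $\_\times\_: {\bf pInj}\times {\bf pInj}\rightarrow {\bf pInj}$ is a bifunctor, and $\psi:\mathbb N\times\mathbb N\rightarrow \mathbb N$ is an isomorphism in $\bf pInj$, conjugation by $\psi$ preserves both identities and composition. Explicitly, for $a,b,c,d\in End(\mathbb N)$,
\[
(a\odot b)(c\odot d) \;=\; \psi(a\times b)\psi^{-1}\psi(c\times d)\psi^{-1}\;=\;\psi\bigl((ac)\times(bd)\bigr)\psi^{-1}\;=\;(ac)\odot(bd),
\]
and $1_\mathbb N\odot 1_\mathbb N = \psi(1_\mathbb N\times 1_\mathbb N)\psi^{-1} = \psi\psi^{-1} = 1_\mathbb N$. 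This interchange law and unit condition are precisely bifunctoriality of $\_\odot\_$.

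Given this, the two homomorphism conditions follow immediately. For unit preservation, $!(1_\mathbb N) = 1_\mathbb N\odot 1_\mathbb N = 1_\mathbb N$. For composition preservation, I would apply the interchange law with $a=c=1_\mathbb N$, $b=f$, $d=g$:
\[
!(f)\,!(g) \;=\; (1_\mathbb N\odot f)(1_\mathbb N\odot g) \;=\; (1_\mathbb N\cdot 1_\mathbb N)\odot (fg) \;=\; 1_\mathbb N\odot(fg) \;=\; !(fg).
\]

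There is no real obstacle here; the only thing to be careful about is ensuring that one is genuinely entitled to the interchange law for $\_\odot\_$. This rests on the fact that $\_\times\_$ is a bifunctor on $\bf pInj$ (so in particular satisfies the typed interchange law $(a\times b)(c\times d)=(ac)\times(bd)$ whenever the composites are defined), together with the observation that since every arrow involved lives in $End(\mathbb N)$, all composites appearing above are defined at the single object $\mathbb N$. Once this is noted, the proof is essentially a two-line calculation.
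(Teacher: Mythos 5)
Your proof is correct and takes essentially the same route as the paper, which simply invokes the functoriality of $\_\odot\_$ to conclude $!(1_\mathbb N)=1_\mathbb N$ and $!(g)!(f)=!(gf)$. The only difference is that you additionally justify that functoriality (the interchange law for $\_\odot\_$) from the bifunctoriality of $\_\times\_$ and conjugation by the isomorphism $\psi$, which the paper leaves implicit.
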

 \begin{proof}
 By the functoriality of  $\_ \odot\_ : End(\mathbb N )\times End(\mathbb N)\rightarrow End(\mathbb N)$,
 \[ !(1_\mathbb N) = 1_\mathbb N \ \ \mbox{ and } \ \ !(g)!(f)= !(gf) \]
 Thus $!(\ )$ is a monoid homomorphism.
  $\Box$ \end{proof}
 
 From the explicit description of $\_ \odot\_ : End(\mathbb N )\times End(\mathbb N)\rightarrow End(\mathbb N)$, and hence of $!(\ ):End(\mathbb N)\rightarrow End(\mathbb N)$ in terms of the Cantor pairing, we may write $!(f)$ explicitly in terms of the untyped quasi-projections (i.e. the generators of the dynamical algebra) as follows:
 \begin{proposition}
 Given arbitrary $f\in End(\mathbb N)$, then $!(f)\in End(\mathbb N)$ may be given explicitly as an infinite join in the natural partial order, as follows:
 \[ !(f)\ = \ p^\ddag f p \ \vee\ q^\ddag p^\ddag f pq \ \vee \ (q^\ddag)^2 p^\ddag f p q^2 \ \vee \ (q^\ddag)^3 p^\ddag f p q^3 \ \vee \ \ldots \]
 \end{proposition}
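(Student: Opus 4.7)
The plan is to show that each summand $T_k := (q^\ddag)^k p^\ddag f p q^k$ is the restriction of $!(f)$ to a distinguished subset of $\mathbb{N}$, that these subsets partition $\mathbb{N}$, and hence that the $T_k$ are pairwise compatible and join to $!(f)$ in the natural partial order.

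First, using the contravariance of $(\ )^\ddag$, I factor $T_k = (pq^k)^\ddag \, f \, (pq^k)$. A short induction on $k$, using only the dynamical-algebra relations $pp^\ddag = 1_{\mathbb{N}} = qq^\ddag$ and $pq^\ddag = 0_{\mathbb{N}} = qp^\ddag$, then yields the two identities
\[ pq^k(\psi(a,k)) = a \quad\text{and}\quad (q^\ddag)^k p^\ddag(a) = \psi(a,k), \]
with $pq^k$ being defined precisely on the ``level-$k$'' subset $L_k := \{\psi(a,k) : a \in \mathbb{N}\}$ (where it extracts the first coordinate of $\psi^{-1}$), and $(pq^k)^\ddag$ acting as its section. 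Hence $T_k$ sends $\psi(a,k) \mapsto \psi(f(a),k)$ whenever $f(a)$ is defined, and is empty outside $L_k$.

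The subsets $\{L_k\}_{k \geq 0}$ partition $\mathbb{N}$ because each $n \in \mathbb{N}$ has a unique level (the $2$-adic valuation of $n+1$). So the $T_k$ have pairwise disjoint domains, and by the same argument applied to outputs, pairwise disjoint ranges. This makes $\{T_k\}$ a compatible family in the natural partial order on ${\bf pInj}$, so the join $\bigvee_k T_k$ exists and equals the set-theoretic union: the partial injection sending $\psi(a,b) \mapsto \psi(f(a),b)$ wherever $f(a)$ is defined. Unpacking $!(f) = 1_{\mathbb{N}} \odot f = \psi \,(1_{\mathbb{N}} \times f)\, \psi^{-1}$ at $n = \psi(a,b)$ yields the same partial injection, completing the identification $\bigvee_k T_k = !(f)$.

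The main obstacle, in my view, is the careful verification of the two identities for $pq^k$ and $(q^\ddag)^k p^\ddag$: these are straightforward inductions on $k$ from the dynamical-algebra relations, but the bookkeeping must carefully track both when the partial compositions are defined and which coordinate of $\psi^{-1}$ is being extracted or rebuilt at each stage, so that the action of $f$ inside $T_k$ aligns with the action of $f$ inside the expansion of $!(f)$.
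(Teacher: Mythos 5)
Your overall strategy --- compute each term $T_k=(q^\ddag)^kp^\ddag f p q^k$ explicitly, observe that the ``levels'' $L_k=\{\psi(a,k):a\in\mathbb N\}$ partition $\mathbb N$ (via the $2$-adic valuation of $n+1$), and conclude that the terms are pairwise orthogonal so their join exists and is just their union --- is exactly what the paper's one-line proof (``expand the definition'') amounts to, and your partition/compatibility argument is correct and considerably more careful than the paper's. Two points need attention, however. First, a self-cancelling slip: with $p(n)=2n$ and $q(n)=2n+1$ as in Lemma \ref{simplesums}, the composite $pq^k$ is a \emph{total} injection, namely $a\mapsto\psi(a,k)$, so it cannot be ``defined precisely on $L_k$''; it is the dagger $(pq^k)^\ddag=(q^\ddag)^kp^\ddag$ that has domain $L_k$ and extracts the first coordinate. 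Your two displayed identities are stated the wrong way round. Because $T_k$ is a conjugation this does not affect your (correct) conclusion that $T_k$ sends $\psi(a,k)\mapsto\psi(f(a),k)$ and is empty outside $L_k$, but the inductions you propose would, as written, be proving false statements.

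Second, and more substantively, the closing sentence --- ``unpacking $!(f)=\psi(1_{\mathbb N}\times f)\psi^{-1}$ at $n=\psi(a,b)$ yields the same partial injection'' --- is asserted rather than carried out, and it is precisely the step that does not go through on the literal conventions. You have shown $\bigvee_k T_k:\psi(a,b)\mapsto\psi(f(a),b)$, i.e.\ $f$ acting on the \emph{first} $\psi$-coordinate, which is $f\odot 1_{\mathbb N}$; whereas $1_{\mathbb N}\times f$ acts on the \emph{second} coordinate, so $\psi(1_{\mathbb N}\times f)\psi^{-1}:\psi(a,b)\mapsto\psi(a,f(b))$. Concretely, for $f=q$ the join sends $0=\psi(0,0)$ to $\psi(1,0)=2$, while $1_{\mathbb N}\odot q$ sends it to $\psi(0,1)=1$. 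This discrepancy reflects a coordinate-convention mismatch in the statement itself rather than a flaw in your computation --- the displayed join, not $1_{\mathbb N}\odot f$ as literally defined, is what the subsequent corollary $f\star {!(f)}={!(f)}$ requires, so either the definition $!(f)=1_{\mathbb N}\odot f$ or the pairing $\psi(a,b)=q^bp(a)$ needs its coordinates transposed --- but a complete proof must either verify this final identification or flag that it fails as stated. At present your write-up hides the one genuine subtlety of the proposition behind the word ``unpacking''.
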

 \begin{proof}
 Again, this is simply by expanding out the definition.
  $\Box$ \end{proof}
 
 As a corollary of the above explicit description, the $!(\ )$ operation provides a  natural, functorial `fixed-point' for the untyped tensor:
 \begin{corol}
 The interaction between the {\em bang} operation, and the untyped tensor is the following:
 \[ f \star !(f) \ = \ !(f) \]
 \end{corol}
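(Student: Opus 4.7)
The plan is to start from the two explicit formulas already established: the corollary giving $f\star g = p^\ddag f p \vee q^\ddag g q$, and the immediately preceding proposition expanding
\[ !(f) \ = \ \bigvee_{n=0}^\infty (q^\ddag)^n\, p^\ddag f p\, q^n. \]
Substituting $g = !(f)$ into the first formula reduces the claim to showing that
\[ p^\ddag f p \ \vee \ q^\ddag\!\left(\bigvee_{n=0}^\infty (q^\ddag)^n p^\ddag f p\, q^n\right)\! q \ = \ \bigvee_{n=0}^\infty (q^\ddag)^n p^\ddag f p\, q^n. \]

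The key intermediate step is a re-indexing argument. First I would push the pre-composition by $q^\ddag$ and post-composition by $q$ through the infinite join; since $\bf pInj$-composition is just the composition of relations and the natural join is (in the compatible case) set-theoretic union of graphs, both left- and right-composition trivially commute with joins whenever the joins exist. Applied term-by-term, this yields
\[ q^\ddag\!\left(\bigvee_{n=0}^\infty (q^\ddag)^n p^\ddag f p\, q^n\right)\! q \ = \ \bigvee_{n=0}^\infty (q^\ddag)^{n+1} p^\ddag f p\, q^{n+1}, \]
which is precisely the join of all $n\ge 1$ summands appearing in $!(f)$. Joining this with the missing $n=0$ term $p^\ddag f p$ recovers the full expansion of $!(f)$, completing the identity.

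The main obstacle is technical rather than conceptual: one must justify that the relevant joins actually exist and that left/right composition preserves them. Existence for $!(f)$ is guaranteed by the previous proposition; existence and preservation for the shifted join follow because the summands $\{(q^\ddag)^n p^\ddag f p\, q^n\}_{n\ge 0}$ are pairwise compatible in the natural partial order (their domains are the pairwise-disjoint sets on which $q^n$ is defined, namely the image of $(q^\ddag)^n$), and a single partial injection composed with a pairwise-compatible family is again pairwise-compatible. Combining the $n=0$ term with the shifted family is then also legitimate, since $p^\ddag f p$ is concentrated on even naturals while every $n\ge 1$ summand is concentrated on odd naturals in its domain — this compatibility follows directly from the explicit formulas of Lemma \ref{simplesums}. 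Once these compatibility remarks are in place, the proof is essentially the re-indexing identity above.
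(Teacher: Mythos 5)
Your proposal is correct and follows essentially the same route as the paper: substitute $g=!(f)$ into $f\star g = p^\ddag f p \vee q^\ddag g q$, distribute the outer $q^\ddag(\ )q$ over the infinite join, and observe that the re-indexed family together with the $n=0$ term $p^\ddag f p$ reassembles the explicit expansion of $!(f)$. The only difference is that you spell out the compatibility and existence of the joins (which the paper leaves implicit under ``distributivity of composition over join''), which is a reasonable addition but not a change of method.
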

 \begin{proof}
 From the explicit description of the untyped tensor in terms of the untyped quasi- projections/injections, 
 \[ f\star !(f) \ = \ p^\ddag f p \vee q^\ddag !(f) q \]
 Combining this with the above explicit description of the bang, and using the distributivity of composition over join in the natural partial order of $\bf pInj$ gives
\[ \begin{array}{rl} f \star !(f) 	& = p^\ddag f p \vee q^\ddag !(f) q  \\
						&																																	\\
						& = p^\ddag f p \vee q^\ddag \left(   \ p^\ddag f p \ \vee\ q^\ddag p^\ddag f pq \ \vee \ (q^\ddag)^2 p^\ddag f p q^2 \ \vee \ (q^\ddag)^3 p^\ddag f p q^3 \ \vee \ \ldots \right) q \\
						&																																	\\
						& = \ p^\ddag f p \ \vee\ q^\ddag p^\ddag f pq \ \vee \ (q^\ddag)^2 p^\ddag f p q^2 \ \vee \ (q^\ddag)^3 p^\ddag f p q^3 \ \vee \ \ldots 
 	\end{array}
\]
  $\Box$ \end{proof}
 
 \begin{remark}{\em The dual connective to the bang} \\
 The dual connective to the bang, $?(\ ):End(\mathbb N) \rightarrow End(\mathbb N)$ is modelled similarly; Girard simply uses 
 \[ ?(g) \ = \ (g \odot 1_\mathbb N ) \]
 The properties of this may be derived from the properties of the bang $!(\ )$ and the canonical symmetry map $\sigma \in End(\mathbb N)$ for the untyped analogue of the Cartesian product, by the identity 
 \[ ?(g) \ = \ \sigma !(g ) \sigma \]
 However, it is interesting also to give an explicit description of the $?(\ )$ operation in terms of the generators of the dynamical algebra. Let us define a countably infinite set of arrows $\{ r_j \in End(\mathbb N) \}_{j\in \mathbb N}$ by 
 \[ r_j \ = \ q^j p \ \ \forall j\in \mathbb N \]
 These arrows satisfy the following relations:
 \[ r_jr_k^\ddag = \left\{ \begin{array}{lr} 1_\mathbb N & \ \ j=k \\ 0_\mathbb N & \ \ j\neq k \end{array}\right. \ \ \ \mbox{ and } \ \ \ \bigvee_{n=0}^\infty r_n^\ddag r_n \ = \ 1_\mathbb N \]
 giving an infinitary analogue of the generating relations of Girard's dynamical algebra.
 By construction, for arbitrary $g\in End(\mathbb N)$, the dual connective to the bang may be given an explicit form as:
 $?(g) = \bigvee_{n\in dom(g)} r^\ddag_{g(n)} r_n$. If we adopt the notational convention that $r^\ddag_{g(n)} r_n=0_\mathbb N$ when $g(n)$ is undefined, we may simply write 
 \[ ?(g) = \bigvee_{n=0}^\infty r^\ddag_{g(n)} r_n \]
 \end{remark}
 
 \section{The exponentials of GoI, and categorical distributivity}
 The constructions in this paper live within the category $\bf pInj$, and rely on the fact that a self-similar structure $(\mathbb N ,\code ,\decode)$ in $({\bf pInj},\uplus)$ may be `split and iterated' to produce a self-similar structure  $(\mathbb N ,\psi ,\psi^{-1})$ in $({\bf pInj},\times)$. The untyped analogue of the Cartesian product $(\_ \odot \_ )$ then provides a `fixed-point' operation for $(\ \star \ )$, the untyped analogue of the disjoint union. 
 
 A key point is that the typing of this fixed-point condition, $!(f) = f \star!(f)$, forces the semi-monoidal tensor $\_ \star \_$ to live within an untyped setting. 
 
 A very natural question is then: what is the relationship of this construction with the categorical theory of distributivity, and in particular, the approach to finitary analogues of $!(\ )$ given in \cite{PH13a}? The category $({\bf pInj},\times , \uplus)$ is a {\em distributive category}\footnote{Note: not a {\em linear distributive category}, in the sense of \cite{BCST}.} in the sense of \cite{ML1,ML2}, and (up to canonical isomorphisms satisfying the required coherence isomorphisms) satisfies 
 \[ A\times (B \uplus C ) \ \ \cong \ \ A \times B \ \uplus \ A \times C \]
 It is, of course, possible to use similar techniques to those of \cite{PH13d} to construct 
a single-object semi-monoidal category with two tensors satisfying untyped analogues of Laplaza's coherence conditions for distributivity. 

However, it seems that the actual constructions of $!(\ )$ and $?(\ )$ described above do not rely on this categorical distributivity in an essential way. Certainly, none of the constructions or proofs seems to require distributivity. 
\\

This is in sharp contrast with the system presented in \cite{PH13a}, where -- based on Shor's quantum algorithm --  the unit objects in a distributive category are used to define an operation 
\[ !^n(U) \ = \ 1 \oplus U \oplus U^2 \oplus \ldots \oplus U^{2^n-1} \]
on unitary maps in finite-dimensional Hilbert space (in fact, within the quantum circuit paradigm). This differs from the constructions described above in a number of ways; the most significant is that it relies on the unit objects in an essential way, via an iterated analogue of the fan-out operation \cite{FANOUT}, whereas the constructions based on the Geometry of Interaction necessarily take place within an untyped (i.e. single-object) category, where we cannot have unit objects for either tensor.  

\section{Conclusions} 
It is frequently assumed that the computational content of reversible logics must be severely restricted; it is equally common to claim that fixed-point operations must introduce some essential irreversibility into computing systems. It is interesting to observe how the Geometry of Interaction provides a counterexample to both these inaccurate assumptions. 

Of equal interest is the fact that this reversible fixed-point operation necessarily lives within an untyped system. As well as raising interesting and deep questions about categorical models (see \cite{PH13d} for implications for MacLane's theory of coherence for associativity and \cite{PH13c} for unexpected concrete applications to modular arithmetic), this raises practical questions about the sort of type systems that may be imposed on any reversible programming language that uses a fixed-point operation as a computational primitive.

 \bibliographystyle{plain}
\bibliography{bang_refs}

\end{document}